\documentclass[10pt,a4paper]{amsart}

\usepackage{microtype}
\usepackage{colortbl}
\usepackage{amsthm,amsmath,amssymb}

\newtheorem{theorem}{Theorem}
\newtheorem{lemma}{Lemma}

\theoremstyle{definition}
\newtheorem{definition}[theorem]{Definition}

\begin{document}

\title[Proof of Northshield's conjecture]{Proof of Northshield's conjecture concerning an analogue of Stern's sequence for $\mathbb{Z}[\sqrt{2}]$}

\subjclass[2010]{Primary 05A16, 11B37}
\keywords{$k$-regular sequences, maximal order}

\author{Michael Coons}
\address{School of Mathematical and Physical Sciences\\
University of Newcastle\\
Australia}
\email{Michael.Coons@newcastle.edu.au}

\date{\today}
\thanks{This research was undertaken while visiting the Alfr\'ed R\'enyi Institute of the Hungarian Academy of Sciences; we thank the Institute and its members for their kindness and support.}

\begin{abstract} We prove a conjecture of Northshield by determining the maximal order of his analogue of Stern's sequence for $\mathbb{Z}[\sqrt{2}]$. In particular, if $b$ is Northshield's analogue, we prove that $$\limsup_{n\to\infty}\frac{2b(n)}{(2n)^{\log_3 (\sqrt{2}+1)}}=1.$$
\end{abstract}

\maketitle

\section{Introduction}

{\em Stern's Diatomic sequence} (commonly called {\em Stern's sequence}) is given by $a(0)=0$, $a(1)=1$, and when $n\geqslant 1$ by $$a(2n)=a(n)\quad\mbox{and}\quad a(2n+1)=a(n)+a(n+1).$$ As an analogue of Stern's sequence for the ring $\mathbb{Z}[\sqrt{2}]$, Northshield \cite{N2015} introduced the sequence defined by $b(0)=0$, $b(1)=1$, and in general by 
\begin{align*} b(3n) &= b(n)\\
b(3n+1) &= \sqrt{2}\cdot b(n)+b(n+1)\\
b(3n+2) &= b(n)+\sqrt{2}\cdot b(n+1).
\end{align*}
In joint work with Tyler \cite{CT2014}, answering a question of Berlekamp, Conway, and Guy \cite[page 115]{BCG1982} and improving on a result of Calkin and Wilf \cite{CW1998}, we determined the maximal order of Stern's sequence; in particular, we proved that $\limsup_{n\to\infty}a(n)n^{-\log_2 \varphi}={3^{\log_2 \varphi}}{{5}^{-1/2}},$ where $
\varphi=(1+\sqrt{5})/2$ is the golden mean, and here and throughout this paper, we write $\log_k c$ for the base-$k$ logarithm of the real number $c$. Concerning his analogue, Northshield \mbox{\cite[Cor.~5]{N2015}} showed that \begin{equation}\label{cor:N}\limsup_{n\to\infty}\frac{2b(n)}{(2n)^{\log_3 (\sqrt{2}+1)}}\geqslant 1,\end{equation} and he conjectured that equality holds. 

In this paper, using the method developed by Coons and Tyler \cite{CT2014} (see also Coons and Spiegelhofer \cite{CS2017}), we prove Northshield's conjecture.

\begin{theorem}\label{main} Let $\{b(n)\}_{n\geqslant 0}$ denote Northshield's analogue of Stern sequence as defined above. Then $$\limsup_{n\to\infty}\frac{2b(n)}{(2n)^{\log_3 (\sqrt{2}+1)}}=1.$$
\end{theorem}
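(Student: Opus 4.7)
Since Northshield's inequality \eqref{cor:N} already supplies $\limsup\geq 1$, my plan is to prove the matching upper bound $\limsup\leq 1$. Set $\alpha:=\log_3(\sqrt{2}+1)$, so $3^\alpha=\sqrt{2}+1$. Following the method of Coons--Tyler, the first step will be to recast the three-term recurrence as a $2\times 2$ matrix action: for $e\in\{0,1,2\}$,
\[
\begin{pmatrix}b(3n+e)\\ b(3n+e+1)\end{pmatrix}=M_e\begin{pmatrix}b(n)\\ b(n+1)\end{pmatrix},
\]
where
\[
M_0=\begin{pmatrix}1 & 0 \\ \sqrt{2} & 1\end{pmatrix},\quad M_1=\begin{pmatrix}\sqrt{2} & 1 \\ 1 & \sqrt{2}\end{pmatrix},\quad M_2=\begin{pmatrix}1 & \sqrt{2} \\ 0 & 1\end{pmatrix}.
\]
Only $M_1$ has spectral radius greater than $1$---namely $\sqrt{2}+1=3^\alpha$, with eigenvectors $(1,\pm 1)^T$---which pins down the normalization exponent. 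Iterating $M_1$ from $(0,1)^T$ along $n_k:=(3^k-1)/2$ (whose base-$3$ digits are all $1$) gives
\[
b(n_k)=\tfrac{1}{2}\bigl((\sqrt{2}+1)^k-(\sqrt{2}-1)^k\bigr),\quad b(n_k+1)=\tfrac{1}{2}\bigl((\sqrt{2}+1)^k+(\sqrt{2}-1)^k\bigr),
\]
so that $b(n_k)+b(n_k+1)=(\sqrt{2}+1)^k=(2n_k+1)^\alpha$ exactly.

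The heart of the proof will be the uniform bound
\[
b(n)+b(n+1)\leq (2n+1)^\alpha \qquad (\ast)
\]
for all $n\geq 1$, which is tight precisely at $n=n_k$ and immediately yields the theorem: $2b(n)\leq 2(2n+1)^\alpha$ together with $(2n+1)^\alpha/(2n)^\alpha\to 1$ forces $\limsup 2b(n)/(2n)^\alpha\leq 1$. I will prove $(\ast)$ by strong induction on $n$, considering the three residues modulo $3$. The $M_1$ case $n=3m+1$ is preserved exactly, since $b(n)+b(n+1)=(\sqrt{2}+1)(b(m)+b(m+1))$ and $(\sqrt{2}+1)(2m+1)^\alpha=(6m+3)^\alpha=(2n+1)^\alpha$. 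The $M_2$ case $n=3m+2$ is immediate, because $b(n)+b(n+1)=(\sqrt{2}+1)(b(m)+b(m+1))-\sqrt{2}\,b(m)\leq(6m+3)^\alpha<(6m+5)^\alpha=(2n+1)^\alpha$.

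The main obstacle will be the $M_0$ case $n=3m$. Here $b(n)+b(n+1)=(\sqrt{2}+1)(b(m)+b(m+1))-\sqrt{2}\,b(m+1)$, so the naive inductive bound is $(6m+3)^\alpha-\sqrt{2}\,b(m+1)$, which \emph{exceeds} the target $(6m+1)^\alpha$. To close this gap, I will invoke the uniform lower bound $b(\ell)\geq 1$ for all $\ell\geq 1$---a routine induction from $b(1)=1$ using that the recurrences have nonnegative coefficients---so that $\sqrt{2}\,b(m+1)\geq\sqrt{2}$. The step then reduces to showing $(6m+3)^\alpha-(6m+1)^\alpha\leq \sqrt{2}$; monotonicity of the left-hand side in $m$ reduces this to $m=1$, that is, to $7^\alpha\geq 3+\sqrt{2}$. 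This in turn follows from concavity of $x\mapsto x^\alpha$ at the combination $7=\tfrac13\cdot 3+\tfrac23\cdot 9$, since then $7^\alpha\geq\tfrac13(\sqrt{2}+1)+\tfrac23(3+2\sqrt{2})=\tfrac{7+5\sqrt{2}}{3}\geq 3+\sqrt{2}$, the last being equivalent to $\sqrt{2}\geq 1$. Combined with the routine base cases $n\in\{1,2\}$, this completes the induction and hence the proof.
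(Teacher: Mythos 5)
Your matrix setup, the computation of $b(n_k)$, and the inductive proof of the inequality $(\ast)$, namely $b(n)+b(n+1)\leqslant (2n+1)^{\alpha}$, are all correct: the three residue cases, the lower bound $b(\ell)\geqslant 1$, and the concavity estimate $7^{\alpha}\geqslant 3+\sqrt{2}$ all check out, and $(\ast)$ is a sharp and interesting statement in its own right. The fatal problem is the very last step, where you deduce the theorem from $(\ast)$. From $b(n)+b(n+1)\leqslant(2n+1)^{\alpha}$ you can only conclude $b(n)\leqslant(2n+1)^{\alpha}$, hence $2b(n)/(2n)^{\alpha}\leqslant 2(2n+1)^{\alpha}/(2n)^{\alpha}\to 2$, not $1$: the ratio $2(2n+1)^{\alpha}/(2n)^{\alpha}$ tends to $2$, and your sentence silently discards that factor of $2$. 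The theorem requires $b(n)\leqslant\tfrac12(2n)^{\alpha}(1+o(1))$, whereas $(\ast)$ only gives $b(n)\leqslant(2n)^{\alpha}(1+o(1))$; the two coincide exactly when $b(n+1)$ is nearly as large as $b(n)$, which does happen at the extremal points $n_k+1=(3^k+1)/2$ but is not something you have established at an arbitrary near-maximal $n$. (Note that $b(n+1)$ can be far smaller than $b(n)$, e.g.\ $b(5)=3$ while $b(6)=\sqrt{2}$, so $(\ast)$ at such $n$ says little about $2b(n)$.)

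To close the gap you must bound $b(n)$ alone, with the constant $\tfrac12$, up to an error that is $o(n^{\alpha})$. This is precisely what the paper does: it builds a piecewise linear majorant $h$ through the points $(3^k/2,(\sqrt{2}+1)^k/2)$, proves $b(m)\leqslant h(m)+(\sqrt{2}+1)\lfloor\log_3 m\rfloor$ by an induction that exploits the location and value of the first maximum of $b$ in $[3^k,3^{k+1}]$, and then shows $2h(x)\leqslant(2x)^{\alpha}$ by a convexity argument; the additive logarithmic error is harmless because it is $o(h(m))$. One could instead try to repair your route by proving that whenever $b(n)$ is within a factor $1-\varepsilon$ of maximal, one of $b(n\pm 1)$ is within $o(n^{\alpha})$ of $b(n)$, but the natural recursion for $|b(n+1)-b(n)|$ contracts only when the last base-$3$ digit is $1$, so this is genuinely additional work and not a routine remark. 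As written, your argument establishes $\limsup_{n\to\infty}2b(n)/(2n)^{\alpha}\leqslant 2$ together with the already known lower bound \eqref{cor:N}, but not the theorem.
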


This paper is organised as follows. In Section \ref{sec:pre}, we define a piecewise linear function and provide several lemmas comparing it to Northshield's sequence. In Section~\ref{sec:main}, we prove Theorem \ref{main}. Finally, in Section \ref{sec:fm}, we give some further comparisons with Stern's sequence and related values and functions.

\section{Preliminaries}\label{sec:pre}

We proceed along the same lines as the arguments of Coons and Tyler \cite{CT2014} and Coons and Spiegelhofer \cite{CS2017}. In particular, we will define a piecewise linear function $h$, which will serve as an upper bound for the sequence $b$. The benefit in this situation is that $h$ is continuous and (except at a few points) differentiable. As well, the function $h$ will be close to the sequence $b$ for the maximal values of $b$. This closeness will allow us use the asymptotic properties of $h$ to determine the desired asymptotic concerning $b$. 

We start by formally defining the function $h$ and a special sequence of points. 

\begin{definition}\label{hdef} Let $x_n:=3^{n}/2$, $y_n:=(\sqrt{2}+1)^{n}/2$ and let $h:\mathbb{R}_{\geqslant 0}\to \mathbb{R}_{\geqslant 0}$ be the piecewise linear function connecting the set of points $\{(0,0)\}\cup \{(x_n,y_n):n\geqslant 1\}$.
\end{definition}

Northshield proved that\footnote{Our version corrects a small typo in \cite{N2015}.} $$\max\{b(m):m\in[3^{n-1},3^{n}]\}=\frac{(\sqrt{2}+1)^n+(\sqrt{2}-1)^n}{2},$$ and, moreover, the first such maximum in this interval occurs at $m=(3^{n}+1)/2$. The points $\{(x_n,y_n):n\geqslant 1\}$ were chosen to be very close to these maximum values.

\begin{lemma}\label{bound} For $m\geqslant 2$, we have $b(m)\leqslant h(m)+(\sqrt{2}+1)\lfloor \log_3(m)\rfloor$.
\end{lemma}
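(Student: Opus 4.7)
The plan is to prove the bound by strong induction on $m$. From Definition~\ref{hdef}, a direct calculation shows that $h(3x) = (\sqrt{2}+1)\, h(x)$ for all $x \geq 3/2$; equivalently, the slopes $s_k$ of $h$ on $[x_k, x_{k+1}]$ equal $\tfrac{\sqrt{2}}{2}\bigl((\sqrt{2}+1)/3\bigr)^k$, so $s_{k+1} = \tfrac{\sqrt{2}+1}{3}\, s_k$ and $s_k \leq s_1 = (2+\sqrt{2})/6$ for all $k \geq 1$. The base cases $m\in\{2,3,4,5\}$ are verified by direct computation, which suffices so that in the inductive step the argument $n = \lfloor m/3 \rfloor$ is always $\geq 2$.

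For the inductive step, write $m = 3n+r$. The case $r=0$ is immediate: $b(3n)=b(n)$, combined with the scaling $h(3n)=(\sqrt{2}+1)h(n)\geq h(n)$ and $\lfloor\log_3(3n)\rfloor = 1+\lfloor\log_3 n\rfloor$, gives the conclusion at once. The case $r=1$ uses $b(3n+1) = \sqrt{2}\, b(n) + b(n+1)$ together with the key piecewise-linear identity
\[
\sqrt{2}\, h(n) + h(n+1) - h(3n+1) \;=\; s_k - s_{k+1} \;=\; s_k\cdot\tfrac{2-\sqrt{2}}{3} \;\leq\; \tfrac{1}{9},
\]
valid in the generic case when $n,n+1\in[x_k,x_{k+1}]$ and $3n,3n+1\in[x_{k+1},x_{k+2}]$; the single boundary case $n=(3^k-1)/2$ (where $[n,n+1]$ straddles $x_k$) gives an even smaller error. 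The case $r=2$ is analogous, and in fact $h(n) + \sqrt{2}\, h(n+1) - h(3n+2)$ is non-positive generically, making this case easier.

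The main obstacle is that the $r=1$ step does not close up with the bare IH: combining two IH bounds with coefficients $\sqrt{2}$ and $1$ contributes slack $(\sqrt{2}+1)^2\lfloor\log_3 n\rfloor$, which already exceeds the target $(\sqrt{2}+1)(\lfloor\log_3 n\rfloor+1)$ once $\lfloor\log_3 n\rfloor\geq 1$. To get around this, I would strengthen the inductive hypothesis to a joint statement on the pair $(b(n),b(n+1))$ via the $2\times 2$ matrix recursion $v_{3n+r} := (b(3n+r), b(3n+r+1))^T = M_r (b(n), b(n+1))^T$, and exploit the fact that the critical matrix $M_1 = \bigl(\begin{smallmatrix}\sqrt{2} & 1\\ 1 & \sqrt{2}\end{smallmatrix}\bigr)$ has eigenvalues $\sqrt{2}\pm 1$, with the small eigenvalue $\sqrt{2}-1$ in the direction $(1,-1)$. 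One expects the error vector $(b(n)-h(n),\, b(n+1)-h(n+1))$ to lie close to this contracting eigenspace, so the repeated action of $M_1$ does not amplify the error geometrically; combined with the geometric decay of $s_k$, this keeps the accumulated slack linear in $\lfloor\log_3 m\rfloor$, with the factor $(\sqrt{2}+1)$ being spent at most once per base-$3$ digit of $m$. Executing this bookkeeping, in the spirit of \cite{CT2014}, is the technical heart of the proof.
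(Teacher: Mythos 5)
Your skeleton (induction on $m$, the scaling $h(3x)=(\sqrt{2}+1)h(x)$ for $x\geqslant 3/2$, and the slope computations quantifying $\sqrt{2}\,h(n)+h(n+1)-h(3n+1)$) is sound, and you have correctly located the crux: applying the inductive hypothesis twice in the $r=1$ case produces slack on the order of $(\sqrt{2}+1)^2\lfloor\log_3 n\rfloor$, which overshoots the target $(\sqrt{2}+1)(\lfloor\log_3 n\rfloor+1)$. But your proposed repair --- strengthening the hypothesis to a joint statement about the pair $(b(n),b(n+1))$ and arguing that the error vector stays near the contracting eigenspace of $M_1$ --- is only a heuristic. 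You never formulate the strengthened invariant, and you offer no argument that it propagates under \emph{all three} matrices $M_0,M_1,M_2$ (which do not share eigenvectors), only an ``expectation'' in the case of repeated $M_1$. Since this is, by your own admission, the technical heart of the proof, the proposal has a genuine gap precisely where the lemma is hard.

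For comparison, the paper closes the induction with a more elementary device that you should borrow: it bounds $\sqrt{2}\,b(k)+b(k+1)\leqslant(\sqrt{2}+1)\max\{b(k),b(k+1)\}$, so the inductive hypothesis is invoked only \emph{once}, at the larger argument $k+1$ (using the monotonicity of $h$), and the single multiplication by $\sqrt{2}+1$ is exactly what the factor $(\sqrt{2}+1)$ in the error term $(\sqrt{2}+1)\lfloor\log_3 m\rfloor$ is designed to absorb --- one unit per base-$3$ digit, not a geometric compounding. In addition, the paper does not run the digit recursion over all of $[3^n,3^{n+1}]$: it first disposes of the top half $[(3^{n+1}+1)/2,\,3^{n+1}]$ outright, using Northshield's closed form $\bigl((\sqrt{2}+1)^{n+1}+(\sqrt{2}-1)^{n+1}\bigr)/2$ for the maximum of $b$ on that interval and the fact that it first occurs at $(3^{n+1}+1)/2$, comparing this directly with $h$ there; only the bottom half is handled via the cases $m=3k,3k+1,3k+2$. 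Your outline uses neither of these ingredients, and without the $\max$ inequality (or a fully executed version of your matrix bookkeeping) the induction does not close.
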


\begin{proof} In the interval $[x_n,x_{n+1}]$, we have that \begin{align}\nonumber  h(x) &=\frac{h(x_{n+1})-h(x_{n})}{x_{n+1}-x_n}(x-x_n)+h(x_n)\\
\label{hxnline}&=\frac{\sqrt{2}}{2}\left(\frac{\sqrt{2}+1}{3}\right)^n x+(\sqrt{2}+1)^n\left(\frac{2-\sqrt{2}}{4}\right).
\end{align}
Using \eqref{hxnline}, we now can check that the result of the lemma holds in the interval $[x_1,x_2]$; see Table \ref{indhyp} for these values.

\begin{table}[ht]
\caption{Values (showing only three decimal places) demonstrating $a(m)\leqslant h(m)+\lfloor \log_3(m)\rfloor$ for $m=2,\ldots,9$.}
\begin{center}
\begin{tabular}{|c|c|c|c|c|c|c|c|c|} \hline
$m$  & 2 & 3 & 4 & 5 & 6 & 7 & 8 & 9 \\ \hline 
$b(m)$ & $1.414$ & $1$ & $2.828$ & $3$ & $1.414$ & $3$ & $2.828$ & $1$ \\
$h(m)+\lfloor \log_3(m)\rfloor$ & $1.491$ &$3.060$ &$3.629$ &$4.198$ &$4.767$ &$5.336$ &$5.905$ & $7.474$ \\ \hline
\end{tabular}
\end{center}
\label{indhyp}
\end{table}%

We will proceed by induction. Suppose that the result holds in $[3^{n-1},3^n]$ and consider $[3^n,3^{n+1}]$. As mentioned above, the first occurring maximum value of $b$ in $[3^n,3^{n+1}]$ is $$b\left(\frac{3^{n+1}+1}{2}\right)=\frac{(\sqrt{2}+1)^{n+1}+(\sqrt{2}-1)^{n+1}}{2}.$$ As $(3^{n+1}+1)/2\in[x_{n+1},x_{n+2}]$, by \eqref{hxnline}, at this value we have 
\begin{align} 
\nonumber h\left(\frac{3^{n+1}+1}{2}\right) +\left\lfloor \log_3\left(\frac{3^{n+1}+1}{2}\right)\right\rfloor &= \frac{\sqrt{2}}{2}\left(\frac{\sqrt{2}+1}{3}\right)^{n+1} \left(\frac{3^{n+1}+1}{2}\right)\\
\nonumber &\qquad\quad+(\sqrt{2}+1)^{n+1}\left(\frac{2-\sqrt{2}}{4}\right)+n+1 \\
\label{hmn}&=\left( \frac{\sqrt{2}}{4\cdot 3^{n+1}}+\frac{1}{2}\right)(\sqrt{2}+1)^{n+1}+n+1\\
\nonumber &>\frac{(\sqrt{2}+1)^{n+1}+(\sqrt{2}-1)^{n+1}}{2}\\
\nonumber &=b\left(\frac{3^{n+1}+1}{2}\right), 
\end{align} so the lemma holds for the value $3^{n+1}+1/2$. 

Now if $m\in[(3^{n+1}+1)/2, 3^{n+1}]$, since the lemma holds for the value $(3^{n+1}+1)/2$ and $b$ takes it's maximal value in $[3^{n},3^{n+1}]$ at $3^{n+1}+1/2$, we have $$b(m)\leqslant b\left(\frac{3^{n+1}+1}{2}\right)\leqslant h\left(\frac{3^{n+1}+1}{2}\right) +\left\lfloor \log_3\left(\frac{3^{n+1}+1}{2}\right)\right\rfloor\leqslant h(m)+\lfloor \log_3(m)\rfloor,$$ where the last inequality follows from the fact that $h$ is monotonically increasing. Thus the lemma holds in the interval $[(3^{n+1}+1)/2, 3^{n+1}]$. It remains to show that the result holds for $m\in[3^{n},(3^{n+1}-1)/2]$.

If $m=3k\in[3^{n},(3^{n+1}-1)/2]$, then $k\in[3^{n-1},3^n]$. By Northshield's definition and the induction hypothesis, we have $$b(m)=b(3k)=b(k)\leqslant h(k)+\lfloor \log_3(k)\rfloor\leqslant h(m)+\lfloor \log_3(m)\rfloor,$$ where as above, the last inequality follows from the monotonicity of $h$.

If $m=3k+1\in[3^{n},(3^{n+1}-1)/2]$, then $k+1\in[3^{n-1},(3^{n}+1)/2]$. Note that in this case, using \eqref{hxnline}, we have \begin{multline}\label{>1}
h(3k+1)-(\sqrt{2}+1)h(k+1)\\ =\frac{\sqrt{2}}{2}\left(\frac{\sqrt{2}+1}{3}\right)^{n+1} (3k+1)-\frac{3\sqrt{2}}{2}\left(\frac{\sqrt{2}+1}{3}\right)^{n+1} (k+1)\\ 
=-\sqrt{2}\left(\frac{\sqrt{2}+1}{3}\right)^{n+1}\in(0,1).
\end{multline}
Now \begin{align*}
b(m)=b(3k+1)&=\sqrt{2}\cdot b(k)+b(k+1)\\
&\leqslant (\sqrt{2}+1)\cdot\max\{b(k),b(k+1)\}\\
&\leqslant (\sqrt{2}+1)\left(h(k+1)+\lfloor \log_3(k+1)\rfloor\right),
\end{align*} again appealing to the monotonicity of $h$. Combining this with \eqref{>1} and using the induction hypothesis, we have \begin{align*} b(m)=b(3k+1)& \leqslant h(3k+1)+(\sqrt{2}+1)\lfloor \log_3(k+1)\rfloor+1\\ &\leqslant h(3k+1)+(\sqrt{2}+1)\lfloor \log_3(3k+1)\rfloor,\end{align*} since here $\lfloor \log_3(k+1)\rfloor=n$ and $\lfloor \log_3(3k+1)\rfloor=n+1$. Thus the result holds for $m=3k+1\in[3^{n},(3^{n+1}-1)/2]$.

The remaining case is $m=3k+2\in[3^{n},(3^{n+1}-1)/2]$. But this follows easily from the monotonicity of $h$, as again we have $$b(m)=b(3k+2)=b(k)+\sqrt{2}\cdot b(k+1)\leqslant (\sqrt{2}+1)\cdot\max\{b(k),b(k+1)\}.$$ Thus the previous case along with the monotonicity of $h$ gives \begin{align*} b(m)=b(3k+2)& \leqslant h(3k+1)+(\sqrt{2}+1)\lfloor \log_3(3k+1)\rfloor\\ &\leqslant h(3k+2)+(\sqrt{2}+1)\lfloor \log_3(3k+2)\rfloor.\end{align*} This finishes the proof of the lemma.
\end{proof}

\begin{lemma}\label{limeq} We have $$\limsup_{m\to\infty}\frac{b(m)}{h(m)}=1.$$
\end{lemma}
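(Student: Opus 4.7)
The plan is to establish the two inequalities $\limsup b(m)/h(m) \leq 1$ and $\limsup b(m)/h(m) \geq 1$ separately.

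For the upper bound, I would divide the inequality of Lemma \ref{bound} through by $h(m)$, obtaining
\[\frac{b(m)}{h(m)} \leq 1 + \frac{(\sqrt{2}+1)\lfloor \log_3 m\rfloor}{h(m)}.\]
Because $h$ is monotonically increasing, for $m \geq x_n = 3^n/2$ one has $h(m) \geq h(x_n) = y_n = (\sqrt{2}+1)^n/2$, and the range $m \in [x_n, x_{n+1})$ forces $n$ to be of order $\log_3 m$. Consequently $h(m)$ grows at least like a positive constant times $m^{\log_3(\sqrt{2}+1)}$. Since $\log_3(\sqrt{2}+1) > 0$, this polynomial growth dominates the logarithmic numerator, so the error term tends to $0$ and the upper bound follows.

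For the matching lower bound I would evaluate the ratio along the sequence $m_n := (3^n+1)/2$, at which Northshield's formula (quoted immediately before Lemma \ref{bound}) gives
\[b(m_n) = \frac{(\sqrt{2}+1)^n + (\sqrt{2}-1)^n}{2}.\]
Since $m_n \in [x_n, x_{n+1}]$, formula \eqref{hxnline} applies and, after simplification (the same algebra that produced \eqref{hmn} in the proof of Lemma \ref{bound} but without the floor contribution), yields
\[h(m_n) = \frac{(\sqrt{2}+1)^n}{2} + \frac{\sqrt{2}\,(\sqrt{2}+1)^n}{4\cdot 3^n}.\]
Factoring $(\sqrt{2}+1)^n/2$ out of both numerator and denominator of the ratio then gives
\[\frac{b(m_n)}{h(m_n)} = \frac{1 + \bigl((\sqrt{2}-1)/(\sqrt{2}+1)\bigr)^n}{1 + \sqrt{2}/(2\cdot 3^n)} \longrightarrow 1\]
as $n \to \infty$, since both $(\sqrt{2}-1)/(\sqrt{2}+1)$ and $1/3$ lie in $(0,1)$. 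This delivers $\limsup b(m)/h(m) \geq 1$ and finishes the proof.

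There is no substantial obstacle here: the upper bound is a direct consequence of Lemma \ref{bound} once one notes that $h$ grows polynomially of positive order while the error in Lemma \ref{bound} is only logarithmic, and the lower bound is a short explicit calculation at a sequence whose very definition, together with the placement of the corners $(x_n,y_n)$ in Definition \ref{hdef}, was engineered so that $b(m_n)$ and $h(m_n)$ agree to leading order. The only mild care needed is to confirm, as above, that the two correction terms in the ratio both decay geometrically.
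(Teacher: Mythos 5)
Your proposal is correct and follows essentially the same route as the paper: the upper bound comes from dividing the estimate of Lemma \ref{bound} by $h(m)$ and noting that the logarithmic error term is negligible against the polynomial growth of $h$, while the lower bound is obtained by evaluating $b/h$ along the points $(3^n+1)/2$ where Northshield's exact maximum formula and equation \eqref{hxnline} apply (the paper uses the index-shifted points $(3^{n+1}+1)/2$, which is the same computation). Your explicit closed form for the ratio $b(m_n)/h(m_n)$ is a slightly more detailed version of the paper's asymptotic $\sim$ argument.
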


\begin{proof} Set $m_n:=(3^{n+1}+1)/2$. Note that $b(m_n)\sim (\sqrt{2}+1)^{n+1}/2$ and also, recalling \eqref{hmn}, that $$h\left(\frac{3^{n+1}+1}{2}\right)=\left( \frac{\sqrt{2}}{4\cdot 3^{n+1}}+\frac{1}{2}\right)(\sqrt{2}+1)^{n+1}\sim \frac{(\sqrt{2}+1)^{n+1}}{2}.$$ Thus $$1=\lim_{n\to\infty}\frac{b(m_n)}{h(m_n)}\leqslant \limsup_{m\to\infty} \frac{b(m)}{h(m)}\leqslant \limsup_{m\to\infty} \frac{h(m)+(\sqrt{2}+1)\lfloor \log_3 m\rfloor}{h(m)}=1,$$ where the last inequality is given by Lemma \ref{bound} and the final equality follows since for $m\in[x_n,x_{n+1}]$, we have \begin{equation*} \frac{(\sqrt{2}+1)\lfloor \log_3 m\rfloor}{h(m)}\geqslant \frac{2\lfloor \log_3 x_n\rfloor}{h(x_{n+1})}\geqslant \frac{2n}{(\sqrt{2}+1)^{n+1}}.\qedhere\end{equation*}
\end{proof}

\begin{lemma}\label{xbound} For $x>3/2$, we have $2\cdot h(x)\leqslant (2x)^{\log_3(\sqrt{2}+1)}.$
\end{lemma}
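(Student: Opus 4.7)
The plan is to exploit the fact that equality holds in the proposed inequality at every breakpoint of the piecewise linear function $h$, and then to invoke concavity of the right-hand side. Let $\alpha := \log_3(\sqrt{2}+1)$, and note that $1 < \sqrt{2}+1 < 3$ gives $0 < \alpha < 1$. At the breakpoint $x_n = 3^n/2$ we have, from Definition \ref{hdef},
\[
2 h(x_n) = 2 y_n = (\sqrt{2}+1)^n,
\]
while
\[
(2 x_n)^{\alpha} = (3^n)^{\log_3(\sqrt{2}+1)} = (\sqrt{2}+1)^n.
\]
So the two quantities agree at every $x_n$ with $n \geq 1$.

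Now I would define $f(x) := (2x)^{\alpha}$ on $(0,\infty)$. Since $0 < \alpha < 1$, the function $f$ is strictly concave (this is immediate from $f''(x) = 2^{\alpha}\alpha(\alpha-1)x^{\alpha-2} < 0$, or from concavity of $t \mapsto t^\alpha$ for $\alpha \in (0,1)$). By definition, on each interval $[x_n, x_{n+1}]$ with $n \geq 1$ the function $2 h$ is the linear chord joining $(x_n, f(x_n))$ and $(x_{n+1}, f(x_{n+1}))$. Concavity of $f$ then places this chord weakly below $f$ throughout $[x_n, x_{n+1}]$. Since any $x > 3/2 = x_1$ lies in some such interval $[x_n, x_{n+1}]$, we conclude $2 h(x) \leqslant f(x) = (2x)^{\log_3(\sqrt{2}+1)}$, as required.

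There is no real obstacle here: the argument rests entirely on the fortunate choice of the points $(x_n, y_n)$, which were designed precisely so that the piecewise linear interpolant $h$ matches the target curve at every breakpoint. Once this alignment is observed, the elementary concavity of $t \mapsto t^\alpha$ for $\alpha \in (0,1)$ closes the proof in one line.
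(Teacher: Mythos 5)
Your proof is correct and follows essentially the same route as the paper: both arguments rest on the exact equality $2h(x_n)=(2x_n)^{\log_3(\sqrt2+1)}$ at the breakpoints together with the sign of the second derivative of $x\mapsto(2x)^{\alpha}$ for $\alpha\in(0,1)$. The paper phrases the second step as ``$H(x)=2h(x)-(2x)^{\alpha}$ is convex and vanishes at the endpoints of $[x_n,x_{n+1}]$, hence cannot be positive inside,'' which is just your chord-below-a-concave-curve observation in different clothing.
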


\begin{proof} Firstly, note that for the sequence $x_n$ as given in Definition \ref{hdef} and $n\geqslant 1$, we have $\log_3 x_n=n-\log_3 2$, so that $$2\cdot h(x_n)=2\cdot y_n=(\sqrt{2}+1)^n=(\sqrt{2}+1)^{\log_3x_n+\log_3 2}=(2x_n)^{\log_3(\sqrt{2}+1)},$$ which shows the lemma holds for the values $x_n$.

Write $$H(x):=2\cdot h(x)- (2x)^{\log_3(\sqrt{2}+1)}.$$ If $H(x)>0$ for some $x\in[x_n,x_{n+1}]$, then since $H$ is differentiable in $(x_n,x_{n+1})$ there is some $w\in(x_n,x_{n+1})$ where $H$ attains a maximum value. But \begin{align*}\frac{{\rm d}^2}{{\rm d}x^2}H(x)&=\frac{{\rm d}^2}{{\rm d}x^2}\left\{- (2x)^{\log_3(\sqrt{2}+1)}\right\}\\
&=-2^{\log_3(\sqrt{2}+1)}\log_3(\sqrt{2}+1)(\log_3(\sqrt{2}+1)-1)x^{\log_3(\sqrt{2}+1)-2},\end{align*} which is positive for all $x\in[x_n,x_{n+1}]$. Thus $H(x)\leqslant 0$ for all $x>x_1=3/2$ proving the lemma.
\end{proof}

\section{Proof of Northshield's conjecture}\label{sec:main}

\begin{proof}[Proof of Theorem \ref{main}] By Lemmas \ref{limeq} and \ref{xbound} we have $$1\leqslant \limsup_{m\to\infty} \frac{2b(m)}{(2m)^{\log_3(\sqrt{2}+1)}}\leqslant \limsup_{m\to\infty}\frac{b(m)}{h(m)}=1,$$ where the first inequality, recorded in \eqref{cor:N}, is due to Northshield.
\end{proof}

\section{Further remarks}\label{sec:fm}

Both Stern's sequence and Northshield's analogue are examples of $k$-regular sequences as defined by Allouche and Shallit in their seminal paper \cite{AS1992}. For an integer $k\geqslant 2$, an integer-valued sequence $f$ is called {\em $k$-regular} provided there exist a positive integer $d$, a finite set of matrices $\mathcal{M}=\{{\bf M}_0,\ldots,{\bf M}_{k-1}\}\subseteq \mathbb{Z}^{d\times d}$, and vectors ${\bf v},{\bf w}\in \mathbb{Z}^d$ such that $$f(n)={\bf w}^T {\bf M}_{w}{\bf v},$$ where ${\bf M}_{w}={\bf M}_{i_0}\cdots{\bf M}_{i_s}$ and $w={i_0}\cdots {i_s}$ is the reversal of the base-$k$ expansion $(n)_k={i_s}\cdots {i_0}$; see \cite[Lemma~4.1]{AS1992}. We call the tuple $({\bf w}, \mathcal{M}, {\bf v})$ the {\em linear representation} of the $k$-regular sequence $f$.

Stern's sequence $a$ is $2$-regular and has linear representation $$\left([1\ 0],\left\{{\bf A}_0,{\bf A}_1\right\}=\left\{\left[\begin{array}{rr} 1&1\\ 0&1\end{array}\right],\left[\begin{array}{rr} 1&0\\ 1&1\end{array}\right]\right\},[1\ 0]\right),$$ whereas Northshield's sequence $b$ is $3$-regular and has linear representation $$\left([1\ 0],\{{\bf B}_0,{\bf B}_1,{\bf B}_2\}=\left\{\left(\begin{matrix} 1 & 0\\ \sqrt{2} & 1 \end{matrix}\right),\left(\begin{matrix} \sqrt{2} & 1\\ 1 & \sqrt{2} \end{matrix}\right),\left(\begin{matrix} 1& \sqrt{2} \\ 0 & 1 \end{matrix}\right)\right\},[0\ 1]\right).$$ This representation of $k$-regular sequences looks a lot like the matrix version of a linear recurrence (coefficients of rational power series), and indeed, $k$-regular sequences are sometimes known as `radix-rational' sequences. 

The method used here can give analogous results for other $k$-regular sequences. Essentially this can be done using the following recipe for a $k$-regular sequence $f$:
\begin{itemize}
\item[1.] Determine the maximal values of $f$ between consecutive powers of $k$ and where they first occur.
\item[2.] Find a piecewise linear function $h$ that is both monotonically increasing and close enough to the above determined maximal values of $f$ so that one has $\limsup_{n\to\infty}f(n)/h(n)=1.$
\item[3.] Show that the desired maximal order holds for $h$ and deduce from Step 2 that it also holds for $f$.
\end{itemize}
Compared to Step 1, in general, Steps 2 and 3 should be relatively easy. The difficulty in Step 1 is related to questions surrounding the joint spectral radius of finite sets of (in this case) integer matrices. 

The {\em joint spectral radius} of a finite set of matrices $\mathcal{M}=\{{\bf M}_0,{\bf M}_1,\ldots, {\bf M}_{k-1}\}$, denoted $\rho(\mathcal{M})$, is defined as the real number $$\rho(\mathcal{M})=\limsup_{n\to\infty}\max_{0\leqslant i_0,i_1,\ldots,i_{n-1}\leqslant k-1}\left\| {\bf M}_{i_0}{\bf M}_{i_1}\cdots{\bf M}_{i_{n-1}}\right\|^{1/n},$$ where $\|\cdot\|$ is any (submultiplicative) matrix norm. It is quite clear that when all of the ${\bf M}_i$ are equal, say to a matrix ${\bf M}$, the joint spectral radius of $\mathcal{M}$ is equal to the spectral radius of ${\bf M}$. The joint spectral radius was introduced by Rota and Strang \cite{RS1960} and has a wide range of applications. For an extensive treatment, see Jungers's monograph \cite{J2009}. 

For the examples of Stern's and Northshield's sequences, the joint spectral radii are the golden and silver means, respectively. That is, $$\rho\left(\left\{{\bf A}_0,{\bf A}_1\right\}\right)=\varphi\quad\mbox{and}\quad \rho\left(\left\{{\bf B}_0,{\bf B}_1,{\bf B}_1\right\}\right)=\rho\left({\bf B}_1\right)=\sqrt{2}+1.$$ The result for the Stern sequence has been known for some decades already, and it seems that for Northshield's sequence, Theorem \ref{main} provides proof; see Coons \cite{IJFCS} for additional details. 

If one can find the joint spectral radius of the set $\mathcal{M}$ associated to $f$, then one can probably find the maximal values of $f$, though in practice, this seems to have been done in the other direction within the research of this area. 

Where these maximal values occur is related to a very interesting and very open question due to Lagarias and Wang \cite{LW1995}. The finite set of integer matrices $\mathcal{M}$ is said to satisfy the {\em finiteness property} provided there is a specific finite product ${\bf M}_{i_0}\cdots {\bf M}_{i_{m-1}}$ of matrices from $\mathcal{M}$ such that $\rho({\bf M}_{i_0}\cdots {\bf M}_{i_{m-1}})^{1/m}=\rho(\mathcal{M}).$ Currently, there is no general way to determine if such a set $\mathcal{M}$ satisfies the finiteness property. 

In the cases of Stern's and Northshield's sequences,  both sets of matrices satisify the finiteness property. For Stern's sequence, the finite product is ${\bf A}_0{\bf A}_1$ and for Northshield's sequence it is the single matrix ${\bf B}_{1}$.

\bibliographystyle{amsplain}
\providecommand{\bysame}{\leavevmode\hbox to3em{\hrulefill}\thinspace}
\providecommand{\MR}{\relax\ifhmode\unskip\space\fi MR }
\providecommand{\MRhref}[2]{%
  \href{http://www.ams.org/mathscinet-getitem?mr=#1}{#2}
}
\providecommand{\href}[2]{#2}


\end{document}